\documentclass[reqno,12pt]{amsart}

\usepackage{amsmath, epsfig, cite}
\usepackage{amssymb}
\usepackage{amsfonts}
\usepackage{latexsym}
\usepackage{amsthm}

\newtheorem{theorem}{Theorem}[section]

\newtheorem{corollary}[theorem]{Corollary}

\theoremstyle{remark}

\setlength{\textwidth}{160.0mm}
\setlength{\oddsidemargin}{0mm}
\setlength{\evensidemargin}{0mm}
\addtolength{\topmargin}{-.5cm}
\addtolength{\textheight}{1cm}


\newcommand{\N}{\mathbb N}


\author{Victor J.\ W.\ Guo}
\address{School of Mathematical Sciences, Huaiyin Normal University,
Huai'an 223300, Jiangsu, People's Republic of China}
\email{jwguo@hytc.edu.cn}
\thanks{The first author was partially supported by the National Natural
Science Foundation of China (grant 11771175).}

\author{Michael J.\ Schlosser}
\address{Fakult\"at f\"ur Mathematik, Universit\"at Wien,
Oskar-Morgenstern-Platz~1, A-1090 Vienna, Austria}
\email{michael.schlosser@univie.ac.at}

\title[Proof of a $q$-supercongruence modulo $\Phi_n(q)^5$]{Proof of a basic hypergeometric supercongruence modulo the fifth power of a cyclotomic polynomial}

\subjclass[2010]{Primary 33D15; Secondary 11A07, 11F33}

\keywords{basic hypergeometric series, $q$-series, supercongruences, identities}

\begin{document}

\begin{abstract}
By means of the $q$-Zeilberger algorithm,
we prove a basic hypergeometric supercongruence modulo the fifth power
of the cyclotomic polynomial $\Phi_n(q)$. This result appears to be quite unique,
as in the existing literature so far no basic hypergeometric supercongruences
modulo a power greater than the fourth of a cyclotomic polynomial have been proved.
We also establish a couple of related results, including a parametric supercongruence.
\end{abstract}

\maketitle

\section{Introduction}\label{secintro}
In 1997, Van Hamme \cite{Hamme} conjectured that 13 Ramanujan-type series including
\begin{align*}
\sum_{k=0}^\infty (-1)^k(4k+1)\frac{(\frac{1}{2})_k^3}{k!^3 }=\frac{2}{\pi}, 
\end{align*}
admit nice $p$-adic analogues, such as
\begin{align*}
\sum_{k=0}^{\frac{p-1}2} (-1)^k(4k+1)\frac{(\frac{1}{2})_k^3}{k!^3 }\equiv
p\,(-1)^{\frac{p-1}2}\pmod{p^3},
\end{align*}
where $(a)_n=a(a+1)\cdots(a+n-1)$ denotes the Pochhammer symbol and $p$ is an odd prime. Up to present,
all of the 13 supercongruences have been confirmed. See \cite{OZ,Swisher} for historic remarks on these supercongruences.
Recently, $q$-analogues of congruences and supercongruences have caught the interests of many authors (see, for example,
\cite{Gorodetsky,Guillera,Guo1,Guo2,Guo3,Guo4,Guo4.5,Guo5,Guo-jmaa,Guo6,GN,GS0,GS,GW0,GW,GuoZu,GuoZu2,LPZ,LP,NP,Tauraso1,Tauraso2,Zudilin}).
In particular, the first author and Zudilin \cite{GuoZu} devised a method, called
`creative microscoping', to prove quite a few $q$-supercongruences by introducing an additional parameter $a$.
In \cite{GS}, the authors of the present paper proved many additional $q$-supercongruences by the creative microscoping method.
Supercongruences modulo a higher integer power of a prime, or, in the $q$-case,
of a cyclotomic polynomial, are very special and usually difficult to prove.
As far as we know, until now the result
\begin{equation}\label{eq:GW}
\sum_{k=0}^{\frac{n-1}2}[4k+1]\frac{(q;q^2)_k^4}{(q^2;q^2)_k^4}
\equiv q^{\frac{1-n}2}[n]+\frac{(n^2-1)(1-q)^2}{24}q^{\frac{1-n}2}[n]^3
\pmod{[n]\Phi_n(q)^3},
\end{equation}
for an odd positive integer $n$, due to the first author and Wang~\cite{GW},
is the unique $q$-supercongruence modulo $[n]\Phi_n(q)^3$ in the literature
that was completely proved. (Several similar conjectural $q$-supercongruences
are stated in \cite{GS} and in \cite{GuoZu}.)
The purpose of this paper is to establish an even higher $q$-congruence,
namely modulo a fifth power of a cyclotomic polynomial.
Specifically, we prove the following three theorems.
(The first two together confirm a conjecture by the authors \cite[Conjecture 5.4]{GS}).

\begin{theorem}\label{thm:main-1}
Let $n>1$ be a positive odd integer. Then
\begin{subequations}
\begin{align}
\sum_{k=0}^{\frac{n+1}2}[4k-1]\frac{(q^{-1};q^2)_k^4}{(q^2;q^2)_k^4} q^{4k}
&\equiv -(1+3q+q^2)[n]^4 \pmod{[n]^4\Phi_n(q)},  \label{eq:main-1}
\intertext{and}
\sum_{k=0}^{n-1}[4k-1]\frac{(q^{-1};q^2)_k^4}{(q^2;q^2)_k^4} q^{4k}
&\equiv -(1+3q+q^2)[n]^4 \pmod{[n]^4\Phi_n(q)}.  \label{eq:main-2}
\end{align}
\end{subequations}
\end{theorem}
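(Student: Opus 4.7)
The plan is to establish \eqref{eq:main-2} by the creative microscoping technique of Guo and Zudilin, combined with the $q$-Zeilberger algorithm (as signalled by the abstract), and then deduce \eqref{eq:main-1} from \eqref{eq:main-2}. Concretely, introduce the parametric sum
\[
S_n(a,q):=\sum_{k=0}^{n-1}[4k-1]\frac{(aq^{-1};q^2)_k\,(q^{-1}/a;q^2)_k\,(q^{-1};q^2)_k^2}{(aq^2;q^2)_k\,(q^2/a;q^2)_k\,(q^2;q^2)_k^2}\,q^{4k},
\]
which specialises to the left-hand side of \eqref{eq:main-2} at $a=1$. When $a=q^{-n}$ the factor $(aq^{-1};q^2)_k=(q^{-n-1};q^2)_k$ vanishes at the index $j=(n+1)/2$ (since $1-q^{0}=0$), so every summand with $k\ge (n+3)/2$ drops out; the same truncation occurs at $a=q^{n}$ via $(q^{-1}/a;q^2)_k$. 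In both cases $S_n(a,q)$ reduces to a terminating, very-well-poised basic hypergeometric series of length $(n+3)/2$, which should be evaluable in closed form.

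Next, I would use the $q$-Zeilberger algorithm to discover and prove closed evaluations of $S_n(q^{\pm n},q)$, expected to be specialisations of Jackson's terminating ${}_8\phi_7$ summation or a Karlsson--Minton variant thereof. Interpolating the two values in $a$ (and using that the four coefficients of the Pochhammer deformation agree at $a$ and $1/a$) produces a congruence of the shape
\[
S_n(a,q)\equiv G(a,q)\pmod{(1-aq^n)(a-q^n)\,\Phi_n(q)^{3}}
\]
for an explicit polynomial $G(a,q)$ with $G(1,q)=-(1+3q+q^2)[n]^4$. Setting $a=1$ turns the factor $(1-aq^n)(a-q^n)$ into $(1-q^n)^2$, so that the resulting congruence already contains $\Phi_n(q)^{5}$ in its modulus. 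To reach the $\Phi_d(q)^{4}$ parts of $[n]^4\Phi_n(q)$ for each proper divisor $d>1$ of $n$, I would re-run the same microscoping argument with $n$ replaced by $d$ (noting that at roots of $\Phi_d(q)$ the summand and the prefactor $[4k-1]$ respect a $d$-periodic structure), and combine all the cyclotomic pieces via the Chinese Remainder Theorem for the pairwise coprime $\Phi_d(q)$.

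For the equivalence of \eqref{eq:main-1} and \eqref{eq:main-2}, it suffices to show that the tail $\sum_{k=(n+3)/2}^{n-1}$ vanishes modulo $[n]^4\Phi_n(q)$. Each tail term carries $(1-q^n)^4$ as a factor of $(q^{-1};q^2)_k^4$ (the index $j=(n+1)/2$ contributes a single $1-q^n$), which immediately supplies $\Phi_d(q)^4$-divisibility of every summand for every divisor $d$ of $n$; the remaining fifth power of $\Phi_n(q)$ should come from a residue-level cancellation of the reduced tail, which I would verify by specialising $q$ to a primitive $n$-th root of unity and observing that the resulting finite sum vanishes through a pairing-type symmetry of the summand. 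The principal obstacle is step two above: upgrading from the two powers of $\Phi_n(q)$ that bare microscoping supplies through $(1-aq^n)(a-q^n)$ to the full five powers required. This is exactly where the $q$-Zeilberger algorithm is indispensable, as it must produce closed evaluations at $a=q^{\pm n}$ sharp enough for the interpolation polynomial $G(a,q)$ to carry three additional powers of $\Phi_n(q)$ in the modulus.
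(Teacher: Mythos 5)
Your plan hinges on the step you yourself flag as the ``principal obstacle,'' and that step does not work as described. Knowing closed evaluations of $S_n(a,q)$ at the two points $a=q^{n}$ and $a=q^{-n}$ lets you interpolate only modulo $(1-aq^n)(a-q^n)$; it cannot by itself produce a congruence of the form $S_n(a,q)\equiv G(a,q)\pmod{(1-aq^n)(a-q^n)\,\Phi_n(q)^{3}}$, because the three extra powers of $\Phi_n(q)$ would require information about $S_n(a,q)$ at (at least) three further values of $a$ specializing to roots of $\Phi_n(q)$, or some other structural input. Setting $a=1$ in what two-point microscoping actually gives you yields at best $\Phi_n(q)^2$ from $(1-q^n)^2$ together with whatever explicit $[n]$-powers sit in the modulus --- this is exactly why the parametric statement (Theorem~\ref{thm:main-2} in the paper) only reaches $[n]^2(1-aq^n)(a-q^n)$, which at $a=1$ recovers $[n]^4$ but \emph{not} the crucial extra factor $\Phi_n(q)$. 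Your tail argument for passing between \eqref{eq:main-1} and \eqref{eq:main-2} has the same flavour of gap: each term with $k\geq(n+3)/2$ does carry $(1-q^n)^4$, but the additional $\Phi_n(q)$ is left to an unverified ``pairing-type symmetry.''

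The paper takes a different and more direct route that avoids the parameter $a$ altogether for Theorem~\ref{thm:main-1}: it constructs a $q$-WZ pair $F(n,k)$, $G(n,k)$ satisfying $[2k-3]F(n,k-1)-[2k-4]F(n,k)=G(n+1,k)-G(n,k)$, and telescopes to obtain an \emph{exact closed-form evaluation} of each truncated sum, e.g.
\begin{equation*}
\sum_{n=0}^{\frac{m+1}2}[4n-1]\frac{(q^{-1};q^2)_n^4}{(q^2;q^2)_n^4} q^{4n}
=-\frac{(1+q) [m]^4 [m+1][m+2]+q^{m+1}[m]^4}{q\, [m+1]^4 (-q;q)_{(m-1)/2}^8}\begin{bmatrix}m-1\\(m-1)/2\end{bmatrix}^4.
\end{equation*}
Here $[m]^4$ is visible as an explicit factor (which settles the modulus $[m]^4$ for every divisor at once, with no Chinese Remainder argument needed), and the fifth power comes from reducing $(-q;q)_{(m-1)/2}^2\equiv q^{(m^2-1)/8}$ and $\bigl[\begin{smallmatrix}m-1\\(m-1)/2\end{smallmatrix}\bigr]\equiv(-1)^{(m-1)/2}q^{(1-m^2)/8}$ modulo $\Phi_m(q)$. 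If you want to salvage your approach, the lesson is that for congruences beyond $\Phi_n(q)^{3}$ or $\Phi_n(q)^{4}$ one currently needs an exact identity for the truncated sum rather than an interpolation from finitely many parameter specializations.
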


\begin{theorem}\label{thm:main-2}
Let $n>1$ be a positive odd integer. Then
\begin{align*}
\sum_{k=0}^{\frac{n+1}2}[4k-1]\frac{(aq^{-1};q^2)_k(q^{-1}/a;q^2)_k(q^{-1};q^2)_k^2}
{(aq^2;q^2)_k(q^2/a;q^2)_k(q^2;q^2)_k^2} q^{4k}
&\equiv 0 \pmod{[n]^2(1-aq^n)(a-q^n)},  \\
\intertext{and}
\sum_{k=0}^{n-1}[4k-1]\frac{(aq^{-1};q^2)_k(q^{-1}/a;q^2)_k(q^{-1};q^2)_k^2}
{(aq^2;q^2)_k(q^2/a;q^2)_k(q^2;q^2)_k^2} q^{4k}
&\equiv 0 \pmod{[n]^2(1-aq^n)(a-q^n)}.
\end{align*}
\end{theorem}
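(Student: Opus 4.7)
The plan is to follow the creative microscoping strategy of Guo and Zudilin. In $\mathbb{Z}[a,q]$ the three polynomials $[n]^{2}$, $1-aq^{n}$ and $a-q^{n}$ are pairwise coprime, so by the Chinese remainder theorem the desired congruence reduces to showing that the sum is divisible separately by each of these three factors.

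I would first observe that the two displayed sums are congruent modulo $[n]^{2}(1-aq^{n})(a-q^{n})$, so that it suffices to treat the first one. For $(n+3)/2\le k\le n-1$ the index $j=(n+1)/2$ is reached in each of the Pochhammer products upstairs: $(q^{-1};q^{2})_{k}^{2}$ then contributes a factor $(1-q^{n})^{2}$, while $(aq^{-1};q^{2})_{k}$ and $(q^{-1}/a;q^{2})_{k}$ yield $(1-aq^{n})$ and $(a-q^{n})/a$ respectively. Since $n$ is odd, the denominator Pochhammers $(aq^{2};q^{2})_{k}$, $(q^{2}/a;q^{2})_{k}$ and $(q^{2};q^{2})_{k}^{2}$ cannot cancel any of these, so every extra summand lies in the modulus.

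To prove divisibility by $(1-aq^{n})$ I would substitute $a=q^{-n}$. Under this specialization the series terminates naturally at $k=(n+1)/2$, because $(q^{-n-1};q^{2})_{k}=0$ for $k\ge(n+3)/2$, and what is left is a terminating very-well-poised $_{8}\phi_{7}$ in base $q^{2}$ (with parameters $a_{\mathrm{VWP}}=q^{-1}$, $b=q^{-n-1}$, $c=q^{n-1}$, $d=e=q^{-1}$) which I expect to evaluate to $0$. A direct calculation for $n=3$ already shows the three terms cancel cleanly; in general I would attempt this via Jackson's $_{8}\phi_{7}$ summation, a Watson-type reduction to a $_{4}\phi_{3}$ whose vanishing is manifest, or an explicit pairing of summands. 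The substitution $a=q^{n}$ handles the factor $(a-q^{n})$ by the same argument, or directly by the $a\leftrightarrow 1/a$ symmetry of the summand.

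The main obstacle is divisibility by $[n]^{2}=\prod_{d\mid n,\,d>1}\Phi_{d}(q)^{2}$, since a single pass of creative microscoping yields only first-order vanishing at each primitive root of unity. For each divisor $d>1$ of $n$, at a primitive $d$-th root of unity $q=\zeta$ the tail $k\ge(d+3)/2$ already contributes $(1-q^{d})^{2}$ by the argument above, so the residual subsum $\sum_{k=0}^{(d+1)/2}$ must also vanish to order two at $q=\zeta$. I would attempt this by exhibiting an involution on the index, for instance $k\leftrightarrow(d+1)/2-k$, that pairs the terms into $\pm$-cancelling couples at $q=\zeta$ and simultaneously kills the first $(q-\zeta)$-derivative via the antisymmetry of the $[4k-1]$ factor; if such a pairing proves too delicate, I would fall back on the $q$-Zeilberger algorithm (the tool advertised in the abstract) to produce a recurrence for the truncated sum in $n$ and verify the congruence by induction, using the already-established divisibility by $(1-aq^{n})(a-q^{n})$ as input to the base case.
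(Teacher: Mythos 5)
Your overall strategy---splitting the modulus by the Chinese remainder theorem into $[n]^2$, $1-aq^n$ and $a-q^n$ and treating each factor separately in the creative-microscoping style---is not the route the paper takes, and as written it has a genuine gap at precisely the point you yourself flag as the main obstacle. The paper instead proves, by a short induction on $N$ (the summand telescopes), the closed-form evaluation \eqref{eq:sum-an} of the partial sum for \emph{every} $N$; the whole theorem then drops out at once, since for $N=(n+1)/2$ and $N=n-1$ the factor $(aq;q^2)_N(q/a;q^2)_N$ visibly contains $(1-aq^n)(1-q^n/a)$, the central $q$-binomial coefficient $\begin{bmatrix}\begin{smallmatrix}2N\\N\end{smallmatrix}\end{bmatrix}$ is divisible by $[n]$ (so its square supplies $[n]^2$), and the remaining denominator factors are coprime to the modulus. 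Your plan never reaches a complete argument for either piece: the vanishing at $a=q^{\pm n}$ is only ``expected'' and would still have to be carried out via Jackson's or Watson's formula, and---more seriously---the divisibility by $\Phi_d(q)^2$ for each divisor $d>1$ of $n$ is left unresolved. The involution $k\leftrightarrow (d+1)/2-k$ is the standard device for producing \emph{first}-order vanishing at a primitive $d$-th root of unity; there is no mechanism in your sketch forcing the first $(q-\zeta)$-derivative to vanish as well, and in the existing creative-microscoping literature second-order vanishing is never obtained by such a pairing alone. Your fallback, a $q$-Zeilberger recurrence ``in $n$'', is ill-posed because the summand does not depend on $n$ at all; the only meaningful recurrence is in the truncation point $N$, and pursuing that honestly leads exactly to the telescoping identity \eqref{eq:sum-an} that the paper uses, at which point the CRT decomposition becomes unnecessary.

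Two smaller points. First, $1-aq^n$ and $a-q^n$ are not coprime over $\mathbb{Z}[a,q]$ in the sense required for CRT (their resultant with respect to $a$ is $1-q^{2n}$, not a unit), so the reduction step needs to be set up over $\mathbb{Q}(a)[q]$ or with explicit control of denominators. Second, your observation that the terms with $(n+3)/2\le k\le n-1$ individually lie in the modulus, so that the two truncations are congruent, is correct and matches what the paper implicitly relies on by treating $N=(n+1)/2$ and $N=n-1$ on the same footing.
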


The $a=-1$ case of Theorem \ref{thm:main-2} admits an even stronger $q$-congruence.

\begin{theorem}\label{thm:main-3}
Let $n>1$ be a positive odd integer. Then
\begin{subequations}
\begin{align}
\sum_{k=0}^{\frac{n+1}2}[4k-1]\frac{(q^{-2};q^4)_k^2}{(q^4;q^4)_k^2}q^{4k}
&\equiv -q^n(1-q+q^2) [n]_{q^2}^2\pmod{[n]_{q^2}^2\Phi_n(q^2)},  \label{eq:3-1} \\
\intertext{and}
\sum_{k=0}^{n-1}[4k-1]\frac{(q^{-2};q^4)_k^2}{(q^4;q^4)_k^2}q^{4k}
&\equiv -(1-q+q^2) [n]_{q^2}^2\pmod{[n]_{q^2}^2\Phi_n(q^2)}.  \label{eq:3-1b}
\end{align}
\end{subequations}
\end{theorem}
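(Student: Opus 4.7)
The plan is to deduce Theorem~\ref{thm:main-3} from Theorem~\ref{thm:main-2} by specialising at $a=-1$ and then sharpening the resulting $q$-congruence by one further power of $\Phi_n(q^2)$. The specialisation step is elementary: applying the identity $(c;q^2)_k(-c;q^2)_k=(c^2;q^4)_k$ twice shows that the summand of Theorem~\ref{thm:main-2} at $a=-1$ collapses to $[4k-1]\,\frac{(q^{-2};q^4)_k^2}{(q^4;q^4)_k^2}\,q^{4k}$, matching the summand in Theorem~\ref{thm:main-3}. The modulus specialises to $[n]^2(1+q^n)^2$, and the identity $[n](1+q^n)=(1+q)[n]_{q^2}$ rewrites this as $(1+q)^2[n]_{q^2}^2$. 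Since $1+q=\Phi_2(q)$ is coprime to every $\Phi_d(q^2)$ with $d\mid n$ and $d>1$ (for $q=-1$ yields $q^2=1$, which is not a primitive $d$-th root of unity for $d>1$), the factor $(1+q)^2$ may be dropped against our target modulus. Thus either sum $S$ in Theorem~\ref{thm:main-3} already satisfies $S\equiv0\pmod{[n]_{q^2}^2}$.

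I would then localise at $\Phi_n(q^2)$. Factoring
\begin{equation*}
[n]_{q^2}^2\,\Phi_n(q^2)=\Phi_n(q^2)^3\prod_{\substack{d\mid n\\1<d<n}}\Phi_d(q^2)^2,
\end{equation*}
each $\Phi_d(q^2)^2$ with $1<d<n$ already divides $[n]_{q^2}^2$, and hence both $S$ and the right-hand side of the claimed congruence. So it suffices to prove, with $\varepsilon=1$ for the first sum and $\varepsilon=0$ for the second,
\begin{equation*}
S+q^{\varepsilon n}(1-q+q^2)\,[n]_{q^2}^2\equiv 0\pmod{\Phi_n(q^2)^3}.
\end{equation*}
Writing $S=[n]_{q^2}^2\,R(q)$ (legitimate as $[n]_{q^2}^2\mid S$ in $\mathbb{Z}[q,q^{-1}]$) and noting that $[n]_{q^2}^2/\Phi_n(q^2)^2$ is coprime to $\Phi_n(q^2)$, this reduces to the single polynomial congruence
\begin{equation*}
R(q)\equiv-q^{\varepsilon n}(1-q+q^2)\pmod{\Phi_n(q^2)}.
\end{equation*}

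The main obstacle is the explicit evaluation of $R(q)\bmod\Phi_n(q^2)$. The natural truncation occurs at $k=(n+1)/2$: for $k\ge(n+3)/2$ the Pochhammer $(q^{-2};q^4)_k$ contains the factor $1-q^{2n}=-(q^2-1)[n]_{q^2}$, hence is divisible by $\Phi_n(q^2)$; squared in the summand, those tail terms are $\equiv0\pmod{\Phi_n(q^2)^2}$, and their precise contribution modulo $\Phi_n(q^2)^3$ should supply exactly the $q^n$-discrepancy between the two variants of the right-hand side. What remains is to pin down $R(q)\bmod\Phi_n(q^2)$ from the short head $\sum_{k=0}^{(n+1)/2}$ with enough precision. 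I would attempt this by applying the $q$-Zeilberger algorithm to the summand to produce a short $q$-recurrence for the truncated sum and verifying the claimed residue value by induction from a small base case; alternatively, one may try to identify the head as a specialisation of a very-well-poised basic hypergeometric summation (a ${}_6\phi_5$ or Jackson-type ${}_8\phi_7$) whose closed-form product yields a transparent expansion modulo $\Phi_n(q^2)$.
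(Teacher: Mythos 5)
Your reduction steps are all sound: the $a=-1$ specialisation of Theorem~\ref{thm:main-2} does give $S\equiv 0\pmod{[n]_{q^2}^2}$, and the localisation argument correctly reduces the theorem to the single residue computation $R(q)\equiv -q^{\varepsilon n}(1-q+q^2)\pmod{\Phi_n(q^2)}$. But that residue computation is the entire content of the theorem, and you do not carry it out: you only list two strategies you ``would attempt'' (a $q$-Zeilberger recurrence, or recognising the head as a ${}_6\phi_5$/${}_8\phi_7$ specialisation). Neither is executed, no recurrence is produced, no base case is checked, and the claim that the tail terms $k\geq(n+3)/2$ ``should supply exactly the $q^n$-discrepancy'' between \eqref{eq:3-1} and \eqref{eq:3-1b} is an unverified heuristic. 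As it stands the proposal proves only the weaker statement $S\equiv 0\pmod{[n]_{q^2}^2}$, which is already contained in Theorem~\ref{thm:main-2}.

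For comparison, the paper closes exactly this gap by proving, by induction on $N$, the closed-form evaluation \eqref{eq:sum-an} of \emph{every} partial sum for general $a$; its $a=-1$ case \eqref{eq:sum-an-2} expresses your $S$ as the explicit product
$-\bigl(1+q^{4N+1}\bigr)q^{-1}(1+q)^{-1}(-q^2;q^2)_N^{-4}\left[\begin{smallmatrix}2N\\N\end{smallmatrix}\right]_{q^2}^2$.
The congruence modulo $[n]_{q^2}^2\Phi_n(q^2)$ then follows by reducing this product with the standard facts $(-q;q)_{(n-1)/2}^2\equiv q^{(n^2-1)/8}$ and $\left[\begin{smallmatrix}n-1\\(n-1)/2\end{smallmatrix}\right]\equiv(-1)^{(n-1)/2}q^{(1-n^2)/8}\pmod{\Phi_n(q)}$, applied with $q\mapsto q^2$, together with the divisibility of $\left[\begin{smallmatrix}2N\\N\end{smallmatrix}\right]_{q^2}$ by $[n]_{q^2}$ for $N=(n+1)/2$ and $N=n-1$. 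Note that having an exact identity for all $N$ also dispenses with your tail analysis: the two truncation points are treated on the same footing, and the $q^{\varepsilon n}$ discrepancy falls out of the factor $1+q^{4N+1}$ and the two different $q$-binomial evaluations. To complete your argument you would need to supply such an identity (or an equivalent verified recurrence with solved initial conditions); without it the proof is incomplete.
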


In the above $q$-supercongruences and in what follows,
\begin{equation*}
(a;q)_n=(1-a)(1-aq)\cdots (1-aq^{n-1})
\end{equation*}
is the $q$-shifted factorial,
\begin{equation*}
[n]=[n]_q=1+q+\cdots+q^{n-1}
\end{equation*}
is the $q$-number,
\begin{equation*}
\begin{bmatrix}n\\k\end{bmatrix}=\begin{bmatrix}n\\k\end{bmatrix}_q:=
\frac{(q;q)_n}{(q;q)_k(q;q)_{n-k}}
\end{equation*}
is the $q$-binomial coefficient,
and $\Phi_n(q)$ is the $n$-th cyclotomic polynomial of $q$.
Note that the congruences in Theorem \ref{thm:main-1} modulo
$[n]\Phi_n(q)^2$ and the congruences in Theorem \ref{thm:main-2}
modulo $[n](1-aq^n)(a-q^n)$ have already been proved by the authors
in \cite[eqs.\ (5.5) and (5.10)]{GS}.

\section{Proof of Theorem \ref{thm:main-1} by the Zeilberger algorithm}
The Zeilberger algorithm (cf.\ \cite{PWZ}) can be used to find that the functions
\begin{align*}
f(n,k)&=(-1)^k \frac{(4n-1)(-\frac{1}{2})_n^3 (-\frac{1}{2})_{n+k}}
{(1)_n^3 (1)_{n-k}(-\frac{1}{2})_k^2},\\[5pt]
g(n,k)&=(-1)^{k-1}\frac{4(-\frac{1}{2})_n^3 (-\frac{1}{2})_{n+k-1}}
{(1)_{n-1}^3 (1)_{n-k}(-\frac{1}{2})_k^2}
\end{align*}
satisfy the relation
\begin{align*}
(2k-3)f(n,k-1)-(2k-4)f(n,k)=g(n+1,k)-g(n,k).
\end{align*}
Of course, given this relation, it is not difficult to verify by hand that it is
satisfied by the above
pair of doubly-indexed sequences $f(n,k)$ and $g(n,k)$.

Here we use the convention $1/(1)_m=0$ for all negative integers $m$.
We now define the $q$-analogues of $f(n,k)$ and $g(n,k)$ as follows:
\begin{align*}
F(n,k) &=(-1)^{k}q^{(k-2)(k-2n+1)}\frac{[4n-1](q^{-1};q^2)_{n}^3(q^{-1};q^2)_{n+k}}{(q^2;q^2)_{n}^3(q^2;q^2)_{n-k}(q^{-1};q^2)_{k}^2}, \\[5pt]
G(n,k) &=\frac{(-1)^{k-1}q^{(k-2)(k-2n+3)}(q^{-1};q^2)_{n}^3(q^{-1};q^2)_{n+k-1}}{(1-q)^2(q^2;q^2)_{n-1}^3(q^2;q^2)_{n-k}(q^{-1};q^2)_{k}^2},
\end{align*}
where we have used the convention that $1/(q^2;q^2)_{m}=0$ for $m=-1,-2,\ldots.$
Then the functions $F(n,k)$ and $G(n,k)$ satisfy the relation
\begin{equation}
[2k-3]F(n,k-1)-[2k-4]F(n,k)=G(n+1,k)-G(n,k).  \label{eq:fnk-gnk}
\end{equation}
Indeed, it is straightforward to obtain the following expressions:
\begin{align*}
\frac{F(n,k-1)}{G(n,k)} &=\frac{q^{2n-4k+6}(1-q)(1-q^{4n-1})(1-q^{2k-3})^2}{(1-q^{2n-2k+2})(1-q^{2n})^3},\\[5pt]
\frac{F(n,k)}{G(n,k)}   &=-\frac{q^{4-2k}(1-q)(1-q^{4n-1})(1-q^{2n+2k-3})}{(1-q^{2n})^3},\\[5pt]
\frac{G(n+1,k)}{G(n,k)} &=\frac{q^{4-2k}(1-q^{2n-1})^3(1-q^{2n+2k-3})}{(1-q^{2n})^3(1-q^{2n-2k+2})}.
\end{align*}
It is easy to verify the identity
\begin{align*}
&\frac{q^{2n-4k+6}(1-q^{4n-1})(1-q^{2k-3})^3}{(1-q^{2n-2k+2})(1-q^{2n})^3}
+\frac{q^{4-2k}(1-q^{2k-4})(1-q^{4n-1})(1-q^{2n+2k-3})}{(1-q^{2n})^3} \\[5pt]
&\quad =\frac{q^{4-2k}(1-q^{2n-1})^3(1-q^{2n+2k-3})}{(1-q^{2n})^3(1-q^{2n-2k+2})}-1,
\end{align*}
which is equivalent to \eqref{eq:fnk-gnk}.
(Alternatively, we could have established \eqref{eq:fnk-gnk} by only guessing
$F(n,k)$ and invoking the $q$-Zeilberger algorithm \cite{WZ}.)

Let $m>1$ be an odd integer. Summing \eqref{eq:fnk-gnk} over $n$ from $0$ to $(m+1)/2$, we get
\begin{align}
[2k-3]\sum_{n=0}^{\frac{m+1}2}F(n,k-1)-[2k-4]\sum_{n=0}^{\frac{m+1}2}F(n,k)
&=G\left(\frac{m+3}{2},k\right)-G(0,k) \notag\\[5pt]
&=G\left(\frac{m+3}{2},k\right).  \label{eq:fnk-gn0-00}
\end{align}
We readily compute
\begin{subequations}\label{eq:gppks}
\begin{align}
G\left(\frac{m+3}{2},1\right)
&=\frac{q^{m-1} (q^{-1};q^2)_{(m+3)/2}^4}{(1-q)^2(q^2;q^2)_{(m+1)/2}^4(1-q^{-1})^2} \notag \\[5pt]
&=\frac{q^{m-3}[m]^4}{[m+1]^4 (-q;q)_{(m-1)/2}^8}
\begin{bmatrix}m-1\\(m-1)/2\end{bmatrix}^4, \label{eq:gppk}  \\
\intertext{and}
G\left(\frac{m+3}{2},2\right)
&=-\frac{(q^{-1};q^2)_{(m+3)/2}^3(q^{-1};q^2)_{(m+5)/2}}{(1-q)^2(q^2;q^2)_{(m+1)/2}^3(q^2;q^2)_{(m-1)/2}(q^{-1};q^2)_2^2}  \notag\\[5pt]
&=-\frac{q^{-2} [m]^4 [m+2]}{[m+1]^3 (-q;q)_{(m-1)/2}^8}
\begin{bmatrix}m-1\\(m-1)/2\end{bmatrix}^4.  \label{eq:gppk-2}
\end{align}
\end{subequations}
Combining \eqref{eq:fnk-gn0-00} and \eqref{eq:gppks}, we have
\begin{align*}
\sum_{n=0}^{\frac{m+1}2}F(n,0)&
=\frac{[-2]}{[-1]}\sum_{n=0}^{\frac{m+1}2}F(n,1)+
\frac{1}{[-1]}G\left(\frac{m+3}{2},1\right) \\[5pt]
&=\frac{1+q}{q}G\left(\frac{m+3}{2},2\right)-
q G\left(\frac{m+3}{2},1\right)   \\[5pt]
&=-\frac{(1+q) [m]^4 [m+1][m+2]+q^{m+1}[m]^4}
{q^3 [m+1]^4 (-q;q)_{(m-1)/2}^8}\begin{bmatrix}m-1\\(m-1)/2\end{bmatrix}^4,
\end{align*}
i.e.,
\begin{align}
\sum_{n=0}^{\frac{m+1}2}[4n-1]\frac{(q^{-1};q^2)_n^4}{(q^2;q^2)_n^4} q^{4n}
=-\frac{(1+q) [m]^4 [m+1][m+2]+q^{m+1}[m]^4}{q [m+1]^4 (-q;q)_{(m-1)/2}^8}\begin{bmatrix}m-1\\(m-1)/2\end{bmatrix}^4.  \label{eq:sum-4k-1}
\end{align}
By \cite[Lemma 2.1]{Guo2} (or \cite[Lemma 2.1]{Guo1}), we have $(-q;q)_{(m-1)/2}^2\equiv q^{(m^2-1)/8}$ $\pmod{\Phi_m(q)}$. Moreover, it is easy to see that
$$
\begin{bmatrix}m-1\\(m-1)/2\end{bmatrix}
=\prod_{k=1}^{(m-1)/2}\frac{1-q^{m-k}}{1-q^k}\equiv \prod_{k=1}^{(m-1)/2}\frac{1-q^{-k}}{1-q^k}=(-1)^{(m-1)/2}q^{(1-m^2)/8}\pmod{\Phi_m(q)},
$$
and $[m]$ is relatively prime to $(-q;q)_{(m-1)/2}$. It follows from \eqref{eq:sum-4k-1} that
\begin{align*}
\sum_{n=0}^{\frac{m+1}2}[4n-1]\frac{(q^{-1};q^2)_n^4}{(q^2;q^2)_n^4} q^{4n}
\equiv -((1+q)^2+q)[m]^4 \pmod{[m]^4\Phi_m(q)}.
\end{align*}
Concluding, the congruence \eqref{eq:main-1} holds.

Similarly, summing \eqref{eq:fnk-gnk} over $n$ from $0$ to $m-1$, we get
\begin{align*}
[2k-3]\sum_{n=0}^{m-1}F(n,k-1)-[2k-4]\sum_{n=0}^{m-1}F(n,k)
=G(m,k),
\end{align*}
and so
\begin{align}
\sum_{n=0}^{m-1}[4n-1]\frac{(q^{-1};q^2)_n^4}{(q^2;q^2)_n^4} q^{4n}
&=\frac{1+q}{q}G(m,2)-q G(m,1) \notag \\[5pt]
&=-\frac{(1+q) [2m-2][2m-1]+q^{2m-2}}{q (-q;q)_{m-1}^8}
\begin{bmatrix}2m-2\\m-1\end{bmatrix}^4. \label{eq:mmm}
\end{align}
It is easy to see that
$$
\frac{1}{[m]}\begin{bmatrix}2m-2\\m-1\end{bmatrix}
=\frac{1}{[m-1]}\begin{bmatrix}2m-2\\m-2\end{bmatrix}
\equiv (-1)^{m-2}q^{2-\binom{m-1}2}\pmod{\Phi_m(q)},
$$
and $(-q;q)_{m-1}\equiv 1\pmod{\Phi_m(q)}$ (see, for example, \cite{Guo2}).
The proof of \eqref{eq:main-2} then follows easily from \eqref{eq:mmm}.

\section{Proof of Theorems \ref{thm:main-2} and \ref{thm:main-3}}
\begin{proof}[Proof of Theorem \ref{thm:main-2}]
It is easy to see by induction on $N$ that
\begin{align}
&\sum_{k=0}^{N}[4k-1]\frac{(aq^{-1};q^2)_k(q^{-1}/a;q^2)_k(q^{-1};q^2)_k^2}
{(aq^2;q^2)_k(q^2/a;q^2)_k(q^2;q^2)_k^2} q^{4k} \notag\\[5pt]
&\quad=\frac{(aq;q^2)_N (q/a;q^2)_N ((a+1)^2q^{2N+1}-a(1+q)(1+q^{4N+1}))}{q(a-q)(1-aq)(aq^2;q^2)_N (q^2/a;q^2)_N (-q;q)_N^4}\begin{bmatrix}2N\\N\end{bmatrix}^2. \label{eq:sum-an}
\end{align}
For $N=(n+1)/2$ or $N=n-1$, we see that $(aq;q^2)_N (q/a;q^2)_N$ contains the factor $(1-aq^n)(1-q^n/a)$. Moreover,
\begin{align*}
\frac{[(n+1)/2]}{[n]}\begin{bmatrix}n\\(n-1)/2\end{bmatrix}=
\begin{bmatrix}n-1\\(n-1)/2\end{bmatrix}
\end{align*}
is a polynomial in $q$. Since $[(n+1)/2]$ and $[n]$ are relatively prime,
we conclude that $\begin{bmatrix}\begin{smallmatrix}n\\(n-1)/2\end{smallmatrix}\end{bmatrix}$ is divisible by $[n]$.
Therefore, $\begin{bmatrix}\begin{smallmatrix}n+1\\(n+1)/2\end{smallmatrix}\end{bmatrix}
=(1+q^{(n+1)/2})\begin{bmatrix}\begin{smallmatrix}n\\(n-1)/2\end{smallmatrix}\end{bmatrix}$
is also divisible by $[n]$. It is also well known that
$\begin{bmatrix}\begin{smallmatrix}2n-2\\n-1\end{smallmatrix}\end{bmatrix}$ is divisible by $[n]$.
Moreover, it is easy to see that $[n]$ is relatively prime to $1+q^m$
for any non-negative integer $m$.
The proof then follows from \eqref{eq:sum-an} by taking $N=(n+1)/2$ and $N=n-1$.
\end{proof}

\begin{proof}[Proof of Theorem \ref{thm:main-3}]
For $a=-1$, the identity \eqref{eq:sum-an} reduces to
\begin{align}
\sum_{k=0}^{N}[4k-1]\frac{(q^{-2};q^4)_k^2}
{(q^4;q^4)_k^2} q^{4k}
&=-\frac{(-q;q^2)_N^2 (1+q^{4N+1})}{q(1+q)(-q^2;q^2)_N^2 (-q;q)_N^4}
\begin{bmatrix}2N\\N\end{bmatrix}^2  \notag\\[5pt]
&=-\frac{(1+q^{4N+1})}{q(1+q)(-q^2;q^2)_N^4 }\begin{bmatrix}2N\\N\end{bmatrix}_{q^2}^2
\label{eq:sum-an-2}
\end{align}
Note that, in the proof of Theorem \ref{thm:main-2}, we have proved that $\begin{bmatrix}\begin{smallmatrix}2N\\N\end{smallmatrix}\end{bmatrix}_{q^2}$ is divisible by $[n]_{q^2}$ for both $N=(n+1)/2$ and $N=n-1$.
Moreover, $[n]_{q^2}$ is relatively prime to $(-q^2;q^2)_m$ for $m\geqslant 0$.
Hence the right-hand side of \eqref{eq:sum-an-2}
is congruent to $0$ modulo $[n]_{q^2}^2$ for $N=(n+1)/2$ or $N=n-1$. To further determine the right-hand side of \eqref{eq:sum-an-2} modulo $[n]_{q^2}^2\Phi_n(q^2)$,
we need only to use the same congruences (with $q\mapsto q^2$) used in the proof of Theorem \ref{thm:main-1}.
\end{proof}

\section{Immediate consequences}

Notice that for $n=p^r$ being an odd prime power,
$\Phi_{p^r}(q)=[p]_{q^{p^{r-1}}}$ holds.
This observation was used in \cite{GW} to extend \eqref{eq:GW}
to a supercongruence modulo $[p^r][p]_{q^{p^{r-1}}}^3$.
In the same vein we immediately deduce from Theorem~\ref{thm:main-1}
the following result:

\begin{corollary}
Let $p$ be an odd prime and $r$ a positive integer. Then
\begin{subequations}
\begin{align}
\sum_{k=0}^{\frac{p^r+1}2}[4k-1]\frac{(q^{-1};q^2)_k^4}{(q^2;q^2)_k^4} q^{4k}
&\equiv -(1+3q+q^2)[p^r]^4 \pmod{[p^r]^4[p]_{q^{p^{r-1}}}},  \label{eq:cor-1a}
\intertext{and}
\sum_{k=0}^{p^r-1}[4k-1]\frac{(q^{-1};q^2)_k^4}{(q^2;q^2)_k^4} q^{4k}
&\equiv -(1+3q+q^2)[p^r]^4 \pmod{[p^r]^4[p]_{q^{p^{r-1}}}}.  \label{eq:cor-1b}
\end{align}
\end{subequations}
\end{corollary}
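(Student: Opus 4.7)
The plan is to deduce the corollary directly from Theorem~\ref{thm:main-1} by specializing $n$ to an odd prime power and invoking the elementary cyclotomic identity
$$
\Phi_{p^r}(q)=\frac{1-q^{p^r}}{1-q^{p^{r-1}}}=1+q^{p^{r-1}}+q^{2p^{r-1}}+\cdots+q^{(p-1)p^{r-1}}=[p]_{q^{p^{r-1}}},
$$
valid for any prime $p$ and positive integer $r$ because the only divisors of $p^r$ are $1,p,\dots,p^r$, so that $\prod_{d\mid p^r}\Phi_d(q)=1-q^{p^r}$ telescopes down to this formula. Since $p$ is an odd prime and $r\geqslant 1$, we have that $n:=p^r$ is an odd integer with $n\geqslant 3>1$, so Theorem~\ref{thm:main-1} is applicable.

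I would then simply substitute $n=p^r$ into \eqref{eq:main-1} and \eqref{eq:main-2}. On the right-hand side, $-(1+3q+q^2)[n]^4$ becomes $-(1+3q+q^2)[p^r]^4$; on the modulus, $[n]^4\Phi_n(q)$ becomes $[p^r]^4\Phi_{p^r}(q)=[p^r]^4[p]_{q^{p^{r-1}}}$ by the identity above. This directly yields \eqref{eq:cor-1a} and \eqref{eq:cor-1b}. The range of summation in each case is unchanged, since $(n+1)/2=(p^r+1)/2$ and $n-1=p^r-1$ match the bounds in the corollary.

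There is really no obstacle here, which is why the authors label this an \emph{immediate} consequence: the entire content of the corollary is Theorem~\ref{thm:main-1} rewritten via the prime-power specialization of $\Phi_n(q)$. The only point worth checking explicitly is the edge case $r=1$, where $\Phi_p(q)=[p]_q=[p]_{q^{p^0}}$, confirming that the formula is consistent with the usual convention $q^{p^0}=q$. All of the hard analytic work — the $q$-Zeilberger certificate \eqref{eq:fnk-gnk}, the evaluation \eqref{eq:sum-4k-1}, and the cyclotomic reductions of $(-q;q)_{(m-1)/2}$ and the central $q$-binomial — has already been carried out in Section~2.
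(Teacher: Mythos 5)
Your proposal is correct and coincides with the paper's own argument: the authors likewise observe that $\Phi_{p^r}(q)=[p]_{q^{p^{r-1}}}$ for an odd prime power $p^r$ and declare the corollary an immediate consequence of Theorem~\ref{thm:main-1} with $n=p^r$. Your justification of the cyclotomic identity via $\prod_{d\mid p^r}\Phi_d(q)=1-q^{p^r}$ and the check that $n=p^r\geqslant 3>1$ are sound.
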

The $q\to 1$ limiting cases of these two identities
yield the following supercongruences:
\begin{corollary}\label{corr-1}
Let $p$ be an odd prime and $r$ a positive integer. Then
\begin{subequations}
\begin{align}
\sum_{k=0}^{\frac{p^r-1}2}\frac{4k+3}{16(k+1)^4\,256^k}{\binom{2k}k}^4
&\equiv 1-5p^{4r}\pmod{p^{4r+1}},  \label{eq:ccor-1a}
\intertext{and}
\sum_{k=0}^{p^r-2}\frac{4k+3}{16(k+1)^4\,256^k}{\binom{2k}k}^4
&\equiv 1-5p^{4r}\pmod{p^{4r+1}}. \label{eq:ccor-1b}
\end{align}
\end{subequations}
\end{corollary}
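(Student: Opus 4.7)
The plan is to derive Corollary \ref{corr-1} by specializing $q=1$ in the $q$-supercongruences \eqref{eq:cor-1a} and \eqref{eq:cor-1b} of the preceding corollary. At $q=1$ we have $[p^r]\big|_{q=1}=p^r$ and $[p]_{q^{p^{r-1}}}\big|_{q=1}=p$, so the modulus $[p^r]^4[p]_{q^{p^{r-1}}}$ becomes $p^{4r+1}$, and the right-hand side $-(1+3q+q^2)[p^r]^4$ becomes $-5\,p^{4r}$, which matches the shape of the classical assertions.

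On the left-hand side, I would first separate off the $k=0$ term, which equals $[-1]\big|_{q=1}=-1$. For $k\ge 1$, I would use the factorization $(q^{-1};q^2)_k=(1-q^{-1})(q;q^2)_{k-1}$ to rewrite the $k$-th summand as $q^{4k-4}[4k-1]\bigl((1-q)(q;q^2)_{k-1}/(q^2;q^2)_k\bigr)^4$, which is now manifestly regular at $q=1$. A direct computation (using $[m]_q\to m$ and the identity $\binom{2k}{k}=\tfrac{2(2k-1)}{k}\binom{2k-2}{k-1}$) then yields
\begin{align*}
\lim_{q\to 1}[4k-1]\frac{(q^{-1};q^2)_k^4}{(q^2;q^2)_k^4}q^{4k}
=\frac{(4k-1)\binom{2k}{k}^4}{(2k-1)^4\cdot 256^k}.
\end{align*}
Reindexing via $k\mapsto k+1$ and using $\binom{2k+2}{k+1}=\tfrac{2(2k+1)}{k+1}\binom{2k}{k}$ converts the remaining sum into $\sum_{k=0}^{(p^r-1)/2}(4k+3)\binom{2k}{k}^4/(16(k+1)^4\cdot 256^k)$. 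Assembling the pieces, the specialization $q\to 1$ of \eqref{eq:cor-1a} reads
\begin{align*}
-1+\sum_{k=0}^{(p^r-1)/2}\frac{(4k+3)\binom{2k}{k}^4}{16(k+1)^4\cdot 256^k}\equiv -5\,p^{4r}\pmod{p^{4r+1}},
\end{align*}
which upon rearrangement is exactly \eqref{eq:ccor-1a}. The derivation of \eqref{eq:ccor-1b} from \eqref{eq:cor-1b} is identical, differing only in the upper summation bound.

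The main issue requiring care is the justification that the $q$-congruence modulo $[p^r]^4[p]_{q^{p^{r-1}}}$ indeed transfers to an honest $p$-adic congruence modulo $p^{4r+1}$ upon setting $q=1$. For this, one needs the classical summands to be $p$-adic integers. Writing $\binom{2k}{k}=(k+1)C_k$ with $C_k$ the $k$-th Catalan number, each summand simplifies to $(4k+3)C_k^4/(16\cdot 256^k)$, whose denominator is a power of $2$ and hence coprime to the odd prime $p$. Granted this integrality, the specialization $q=1$ produces a valid congruence in $\mathbb Z_{(p)}$, completing the derivation.
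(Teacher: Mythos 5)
Your proposal is correct and takes exactly the paper's route: the paper simply asserts that Corollary~\ref{corr-1} is the $q\to 1$ limiting case of the preceding $q$-congruences \eqref{eq:cor-1a} and \eqref{eq:cor-1b}, which is precisely your argument. Your explicit evaluation of the limit of each summand, the reindexing $k\mapsto k+1$ that absorbs the $k=0$ term $[-1]_q\to -1$, and the observation that the denominators are powers of $2$ (hence prime to $p$, so the congruence survives specialization in $\mathbb{Z}_{(p)}$) correctly supply the details the paper leaves implicit.
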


Similarly, we deduce from Theorem~\ref{thm:main-3}
the following result:
\begin{corollary}
Let $p$ be an odd prime and $r$ a positive integer. Then
\begin{subequations}
\begin{align}
\sum_{k=0}^{\frac{p^r+1}2}[4k-1]\frac{(q^{-2};q^4)_k^2}{(q^4;q^4)_k^2}q^{4k}
&\equiv -q^{p^r}(1-q+q^2) [p^r]_{q^2}^2\pmod{[p^r]_{q^2}^2[p]_{q^{2p^{r-1}}}},  \label{eq:cor-2a} \\
\intertext{and}
\sum_{k=0}^{p^r-1}[4k-1]\frac{(q^{-2};q^4)_k^2}{(q^4;q^4)_k^2}q^{4k}
&\equiv -(1-q+q^2) [p^r]_{q^2}^2\pmod{[p^r]_{q^2}^2[p]_{q^{2p^{r-1}}}}.  \label{eq:cor-2b}
\end{align}
\end{subequations}
\end{corollary}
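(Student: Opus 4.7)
The plan is to deduce the corollary from Theorem \ref{thm:main-3} by exactly the same specialization argument used a few lines earlier to derive the first corollary of the section from Theorem \ref{thm:main-1}. The one arithmetic fact needed is the classical factorization of cyclotomic polynomials at prime powers: for any prime $p$ and positive integer $r$, one has
\begin{equation*}
\Phi_{p^r}(q)=\Phi_p(q^{p^{r-1}})=1+q^{p^{r-1}}+q^{2p^{r-1}}+\dots+q^{(p-1)p^{r-1}}=[p]_{q^{p^{r-1}}}.
\end{equation*}
Replacing $q$ by $q^2$ throughout this identity gives the key relation
\begin{equation*}
\Phi_{p^r}(q^2)=[p]_{q^{2p^{r-1}}}.
\end{equation*}

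Now I would apply Theorem \ref{thm:main-3} with $n=p^r$, which is a positive odd integer greater than $1$ since $p$ is odd and $r\geqslant1$. Under this specialization, the modulus $[n]_{q^2}^2\Phi_n(q^2)$ on the right-hand side of \eqref{eq:3-1} and \eqref{eq:3-1b} transforms into exactly $[p^r]_{q^2}^2[p]_{q^{2p^{r-1}}}$, while the left-hand sides and the explicit polynomial factors $-q^n(1-q+q^2)[n]_{q^2}^2$ and $-(1-q+q^2)[n]_{q^2}^2$ become precisely those displayed in \eqref{eq:cor-2a} and \eqref{eq:cor-2b}. No additional analysis of the truncated sums is required, since Theorem \ref{thm:main-3} already supplies the full congruence.

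In summary, the corollary is a direct instance of Theorem \ref{thm:main-3} after invoking the single cyclotomic identity $\Phi_{p^r}(q^2)=[p]_{q^{2p^{r-1}}}$. There is no real obstacle to overcome: the entire content lies in Theorem \ref{thm:main-3}, and the present statement is merely its prime-power form, in complete parallel with how the earlier corollary involving \eqref{eq:cor-1a}--\eqref{eq:cor-1b} followed from Theorem \ref{thm:main-1}.
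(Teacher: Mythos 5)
Your proposal is correct and follows exactly the paper's own argument: the paper deduces this corollary from Theorem \ref{thm:main-3} by setting $n=p^r$ and invoking the identity $\Phi_{p^r}(q)=[p]_{q^{p^{r-1}}}$, which upon the substitution $q\mapsto q^2$ turns the modulus $[n]_{q^2}^2\Phi_n(q^2)$ into $[p^r]_{q^2}^2[p]_{q^{2p^{r-1}}}$. Nothing further is needed.
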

The $q\to 1$ limiting cases of these two identities
yield the following supercongruences:
\begin{corollary}\label{corr-2}
Let $p$ be an odd prime and $r$  a positive integer. Then
\begin{subequations}
\begin{align}
\sum_{k=0}^{\frac{p^r-1}2}\frac{4k+3}{4(k+1)^2\,16^k}{\binom{2k}k}^2
&\equiv 1-p^{2r}\pmod{p^{2r+1}},  \label{eq:ccor-2a} \\
\intertext{and}
\sum_{k=0}^{p^r-2}\frac{4k+3}{4(k+1)^2\,16^k}{\binom{2k}k}^2
&\equiv 1-p^{2r}\pmod{p^{2r+1}}. \label{eq:ccor-2b}
\end{align}
\end{subequations}
\end{corollary}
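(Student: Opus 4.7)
The plan is to deduce Corollary~\ref{corr-2} by specialising $q\to 1$ in the $q$-congruences \eqref{eq:cor-2a} and \eqref{eq:cor-2b} of the preceding corollary.  At $q=1$ the modulus $[p^r]_{q^2}^2\,[p]_{q^{2p^{r-1}}}$ becomes $p^{2r}\cdot p=p^{2r+1}$, so provided the limit is taken legitimately, everything in sight lands in $\Z_{(p)}$ modulo $p^{2r+1}$.

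The main computation is the $q\to 1$ limit of each summand on the left-hand side.  The $k=0$ term is $[-1]=-q^{-1}$, which tends to $-1$.  For $k\geq 1$, factoring $1-q^{-2}=-q^{-2}(1-q^2)$ and comparing leading orders at $q=1$ yields
\begin{align*}
\lim_{q\to 1}\;[4k-1]\,\frac{(q^{-2};q^4)_k^2}{(q^4;q^4)_k^2}\,q^{4k}
=\frac{(4k-1)\binom{2k}{k}^2}{16^k(2k-1)^2}.
\end{align*}
Likewise, the right-hand sides $-q^{p^r}(1-q+q^2)[p^r]_{q^2}^2$ and $-(1-q+q^2)[p^r]_{q^2}^2$ both evaluate to $-p^{2r}$ at $q=1$.

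The second step is a reindexing $k=j+1$, for which the elementary identity $\binom{2j+2}{j+1}=\tfrac{2(2j+1)}{j+1}\binom{2j}{j}$ rewrites the generic term as $\frac{(4j+3)\binom{2j}{j}^2}{4(j+1)^2\,16^j}$, and the upper summation bound shifts from $(p^r+1)/2$ down to $(p^r-1)/2$ in the first case and from $p^r-1$ to $p^r-2$ in the second.  Adding the $k=0$ contribution $-1$ to this reindexed sum produces $-1+\sum_{j}\frac{(4j+3)\binom{2j}{j}^2}{4(j+1)^2\,16^j}\equiv -p^{2r}\pmod{p^{2r+1}}$, and rearranging gives exactly \eqref{eq:ccor-2a} and \eqref{eq:ccor-2b}.

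The one subtle point is the legitimacy of passing from a polynomial $q$-congruence to a $p$-adic congruence of rational numbers.  This is standard: the denominators appearing in the summands and in $[p^r]_{q^2}^2\,[p]_{q^{2p^{r-1}}}$ take $p$-adic unit values at $q=1$, so after clearing them the congruence specialises cleanly.  This is precisely the same ``$q\to 1$'' procedure used to pass from \eqref{eq:cor-1a}--\eqref{eq:cor-1b} to Corollary~\ref{corr-1}, so no new difficulty arises here; the only genuinely new ingredient is the shifted-index identification of the summand with the classical hypergeometric one.
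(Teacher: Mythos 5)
Your proposal is correct and follows essentially the same route as the paper, which simply obtains Corollary~\ref{corr-2} as the $q\to 1$ limiting case of the congruences \eqref{eq:cor-2a} and \eqref{eq:cor-2b}; the paper gives no further detail, so your explicit limit computation of the summand, the reindexing $k=j+1$ via $\binom{2j+2}{j+1}=\frac{2(2j+1)}{j+1}\binom{2j}{j}$, and the remark on the legitimacy of the specialization (justified, e.g., by the closed form \eqref{eq:sum-an-2}, whose denominator is a $p$-adic unit at $q=1$) supply exactly the routine details the authors omit.
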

The supercongruences in Corollaries~\ref{corr-1} and \ref{corr-2} are remarkable
since they are valid for arbitrarily high prime powers.
Swisher~\cite{Swisher} had empirically observed several similar but different hypergeometric supercongruences and stated them without proof.




\begin{thebibliography}{99}

\bibitem{Gorodetsky}O. Gorodetsky,
$q$-Congruences, with applications to supercongruences
and the cyclic sieving phenomenon, preprint, May 2018; arXiv:1805.01254.

\bibitem{Guillera}J. Guillera,
WZ pairs and $q$-analogues of Ramanujan series for $1/\pi$,
\emph{J. Difference Equ. Appl.}  \textbf{24} (2018), 1871--1879.

\bibitem{Guo1}V.J.W. Guo,
A $q$-analogue of the (J.2) supercongruence of Van Hamme,
\emph{J. Math. Anal. Appl.} \textbf{466} (2018), 776--788.

\bibitem{Guo2}V.J.W. Guo,
A $q$-analogue of the (I.2) supercongruence of Van Hamme,
\emph{Int. J. Number Theory} \textbf{15} (2019), 29--36.

\bibitem{Guo3}V.J.W. Guo,
Proof of a $q$-congruence conjectured by Tauraso,
\emph{Int. J. Number Theory} \textbf{15} (2019), 37--41.

\bibitem{Guo4}V.J.W. Guo,
$q$-Analogues of the (E.2) and (F.2) supercongruences of Van Hamme,
\emph{Ramanujan J.} (2018),
{\tt https://doi.org/10.1007/s11139-018-0021-z}

\bibitem{Guo4.5}V.J.W. Guo, $q$-Analogues of two ``divergent" Ramanujan-type supercongruences,
\emph{Ramanujan J.}, {\tt https://doi.org/10.1007/s11139-019-00161-0}

\bibitem{Guo5}V.J.W. Guo,
A $q$-analogue of a curious supercongruence of Guillera and Zudilin,
\emph{J. Difference Equ. Appl.} \textbf{25} (2019), 342--350.

\bibitem{Guo-jmaa}V.J.W. Guo, 
Factors of some truncated basic hypergeometric series,
\emph{J. Math. Anal Appl.} \textbf{476} (2019), 851--859.

\bibitem{Guo6}V.J.W. Guo, Some $q$-congruences with parameters,
\emph{Acta Arith.}, to appear.

\bibitem{GN}V.J.W. Guo and H.-X. Ni, A $q$-congruence involving the Jacobi symbol,
{\em Int. J. Number Theory}, to appear.

\bibitem{GS0}V.J.W. Guo and M.J. Schlosser,
Some new $q$-congruences for truncated basic hypergeometric series,
\emph{Symmetry} \textbf{11}(2) (2019):268.

\bibitem{GS}V.J.W. Guo and M.J. Schlosser,
Some $q$-supercongruences from transformation formulas for basic
hypergeometric series, preprint, December 2018; arXiv:1812.06324.

\bibitem{GW0}V.J.W. Guo and S.-D. Wang,
Factors of sums and alternating sums of products of q-binomial coefficients and
powers of $q$-integers, \emph{Taiwan. J. Math.} 23 (2019),  11--27.

\bibitem{GW}V.J.W. Guo and S.-D. Wang,
Some congruences involving fourth powers of central $q$-binomial coefficients,
\emph{Proc. Roy. Soc. Edinburgh Sect. A}, {\tt https://doi.org/10.1017/prm.2018.96}

\bibitem{GuoZu}V.J.W. Guo and W. Zudilin,
A $q$-microscope for supercongruences,
\emph{Adv. Math. }\textbf{346} (2019), 329--358.

\bibitem{GuoZu2}V.J.W. Guo and W. Zudilin,
On a $q$-deformation of modular forms,
\emph{J. Math. Anal Appl. }\textbf{475} (2019), 1636--1646.

\bibitem{LPZ}J. Liu, H. Pan, and Y. Zhang,
A generalization of Morley's congruence,
\emph{Adv. Difference Equ.} (2015) 2015:254.

\bibitem{LP}J.-C. Liu and F. Petrov,
Congruences on sums of $q$-binomial coefficients,
preprint, February 2019; arXiv:1902.03851.

\bibitem{NP}H.-X. Ni and H. Pan,
Divisibility of some binomial sums,
preprint, August 2018; arXiv:1808.03213.

\bibitem{OZ}R. Osburn and  W. Zudilin,
On the (K.2) supercongruence of Van Hamme,
\emph{J. Math. Anal. Appl.}  433  (2016),  706--711.

\bibitem{PWZ}M. Petkov\v{s}ek, H.S. Wilf, and D. Zeilberger,
\emph{$A=B$}, A K Peters, Ltd., Wellesley, MA, 1996.

\bibitem{Straub}A. Straub,
Supercongruences for polynomial analogs of the Ap\'ery numbers,
\emph{Proc. Amer. Math. Soc. }\textbf{147} (2019), 1023--1036.

\bibitem{Swisher}H. Swisher,
On the supercongruence conjectures of van Hamme,
\emph{Res. Math. Sci.} (2015) 2:18.

\bibitem{Tauraso1}R. Tauraso,
$q$-Analogs of some congruences involving Catalan numbers,
\emph{Adv. Appl. Math.} \textbf{48} (2009), 603--614.

\bibitem{Tauraso2}R. Tauraso,
Some $q$-analogs of congruences for central binomial sums,
\emph{Colloq. Math.} \textbf{133} (2013), 133--143.

\bibitem{Hamme}L. Van Hamme,
Some conjectures concerning partial sums of generalized hypergeometric series,
in: \emph{$p$-Adic Functional Analysis} (Nijmegen, 1996),
Lecture Notes in Pure and Appl. Math. 192,
Dekker, New York, 1997, pp.~223--236.

\bibitem{WZ} H.S. Wilf and D. Zeilberger,
An algorithmic proof theory for hypergeometric (ordinary and ``$q$")
multisum/integral identities,
\emph{Invent. Math. }\textbf{108} (1992), 575--633.

\bibitem{Zudilin}W. Zudilin,
Congruences for $q$-binomial coefficients,
\emph{Ann. Comb.}, to appear.

\end{thebibliography}
\end{document}